\newcommand{\be}{\begin{equation}}
\newcommand{\ee}{\end{equation}}
\newcommand{\bes}{\begin{equation}\begin{aligned}}
\newcommand{\ees}{\end{aligned}\end{equation}}
\newcommand{\ben}{\begin{equation}\nonumber\begin{aligned}}
\renewcommand{\leq}{\leqslant}
\renewcommand{\geq}{\geqslant}
 \newtheorem{theorem}{Theorem}[section]
\newtheorem{lemma}[theorem]{Lemma}
\newtheorem{remark}[theorem]{Remark}
\numberwithin{equation}{section}
\subjclass[2010]{37L30,37L40,37L55,
35B40,35B41,60H10}
\keywords{Invariant measure; tightness; Limit measure; nonlinear noise; lattice system.
%\\ This work was supported by
%All correspondences should be addressed to B. Wang.
}
\author{Renhai  Wang}
\address[Renhai  Wang]
{  Institute of Applied Physics and Computational Mathematics, Beijing 100088, China}
\email[R.~Wang]{rwang-math@outlook.com}
\author{Tom\'as Caraballo} \address[Tom\'as Caraballo]
{Departamento de Ecuaciones Diferenciales y An\'{a}lisis Num\'{e}rico, Facultad de
     	Matem\'{a}ticas, \\ \small Universidad de Sevilla, C/ Tarfia s/n, 41012-Sevilla, Spain}
\email[T. Caraballo]{caraball@us.es}
\author{Nguyen Huy Tuan$^*$} \address[Nguyen Huy Tuan]
{Division of Applied Mathematics, Science and Technology Advanced Institute, Van Lang University, Ho Chi Minh City, Vietnam, \\ \small Faculty of Technology, Van Lang University, Ho Chi Minh City, Vietnam}
\email[N.H. Tuan]{nguyenhuytuan@vlu.edu.vn}
\thanks{$^*$Corresponding author: Nguyen Huy Tuan (E-mail: nguyenhuytuan@vlu.edu.vn)}
\begin{document}

\baselineskip=1.3\baselineskip

\begin{abstract}
\textsf{The limiting stability of invariant probability measures of time homogeneous
transition semigroups for autonomous stochastic systems has been extensively discussed in the literature. In this paper we
initially initiate a program to study the asymptotic stability of evolution systems of probability measures of time inhomogeneous transition operators for nonautonomous stochastic systems. A general theoretical result on
this topic is established in a Polish space by establishing some sufficient conditions which can be verified in applications. Our abstract results are applied to a stochastic lattice reaction-diffusion equation driven by a time-dependent nonlinear noise. A time-average method and a mild condition on the time-dependent diffusion function are used to prove that
 the limit of every evolution system of probability measures must be an evolution system of probability measures of the limiting equation. The theoretical results are expected to  be applied to various stochastic lattice systems/ODEs/PDEs in the future.}
\end{abstract}
\title[Asymptotic stability of evolution systems of probability measures]{Asymptotic stability of evolution systems of probability measures for nonautonomous stochastic systems: Theoretical results and applications}
\maketitle

\section{Introduction}
\subsection{Statement of problems}
A basic approach to look at the asymptotic stability of stochastic systems with noise perturbations is to consider the limiting stability of their invariant probability measures with respect to the noise intensity. Recently, this kind of the limiting stability of
invariant probability measures of time
homogeneous transition semigroups was discussed  in the literature for \emph{autonomous} stochastic lattice systems/ODEs/PDEs, see e.g., \cite{Chenzhang1,Chenzhang2,LIding0,LIding1}.
As far as the authors can find, currently,  there are no results reported in the literature on
the limiting stability of an \emph{evolution system of probability measures} (an extension of invariant measures from autonomous to nonautonomous developed by Da Prato and
R\"{o}ckner \cite{DaPrato3,DaPrato3.1}) of time
inhomogeneous transition  operators for \emph{nonautonomous} stochastic systems.

\subsection{General framework and  theoretical results}The goal of the present work is to initiate a program of studying limiting stability of evolution systems of probability measures of time inhomogeneous transition operators. We will establish a general setting on the limiting stability of
evolution system of probability measures for an abstract time inhomogeneous transition operator on a Polish space.
More specifically,
we will show, under certain conditions,  that the limit of every evolution system of probability measures (if exists) must be an evolution system of probability measures of the limiting transition operator.

Let $(\mathbb{X},\|\cdot\|_\mathbb{X})$ be a Polish space, $\mathcal{P}(\mathbb{X})$ be the set of all
probability measures on $\mathbb{X}$, $\mathscr{B}(\mathbb{X})$ be
the Borel $O$-algebra on $\mathbb{X}$, and  $B_b(\mathbb{X})$ ($C_b(\mathbb{X})$) the space of all
bounded Borel (continuous) functions on $\mathbb{X}$. Assume
$\{X^\epsilon(t,\tau,x), t\geq \tau\in\mathbb{R}\}$ on $\mathbb{X}$ is a unique
noise driven stochastic process
with value $x\in\mathbb{X}$ and the noise intensity $\epsilon\in[a,b]$ for $0\leq a<b<\infty$.
For $\varphi\in B_b(\mathbb{X})$, $\Lambda\in \mathscr{B}(\mathbb{X})$ and $\eta\in\mathcal{P}(\mathbb{X})$,  we
define a transition operator
 $(P^\epsilon_{\tau,t})_{t\geq\tau}$ by $(P^\epsilon_{\tau,t}\varphi)(x)=\mathbb{E}
[\varphi(X^\epsilon(t,\tau,x)) ]$, a transition probability function
$P^\epsilon_{\tau,t}(u_0,\Lambda)
=\mathbb{P}\{\omega\in\Omega
:u^\epsilon(t,\tau,u_0)\in\Lambda\}$ and the adjoint operator  $(Q^\epsilon_{\tau,t})_{t\geq\tau}$ of $(P^\epsilon_{\tau,t})_{t\geq\tau}$  by
$Q^\epsilon_{\tau,t}\eta(\Lambda)=
\int_{\mathbb{X}
}P^\epsilon_{\tau,t}(x,\Lambda)
\eta(dx)$. As in Da Prato and
R\"{o}ckner \cite{DaPrato3,DaPrato3.1}, we say $\{\eta^\epsilon_t\}_{t\in\mathbb{R}}\subseteq
\mathcal{P}(\mathbb{X})$
is an \emph{evolution system of probability measures}  of $(P^\epsilon_{\tau,t})_{t\geq\tau}$ if
$Q^\epsilon_{\tau,t}\eta_\tau=\eta_t$ for all  $t\geq \tau\in\mathbb{R}$.

For some technical reasons, the following assumption is needed when we  discuss the limiting stability of
evolution systems of probability measures of time inhomogeneous transition operators.

\textbf{CIP}(\texttt{Convergence in
Probability}). For every
compact set $\mathbf{K}\subseteq\mathbb{X}$, $\tau\in\mathbb{R}$, $t\geq\tau$, $\epsilon_0\in[a,b]$ and $\delta>0$,
\begin{align}\label{CIP}
&\lim_{\epsilon\rightarrow\epsilon_0}
\sup_{x\in \mathbf{K}}
\mathbb{P}\bigg(\Big\{\omega\in\Omega:
\|X^\epsilon(t,\tau,x)-
X^{\epsilon_0}(t,\tau,x)\|_\mathbb{X}\geq \delta \Big\}\bigg)=0.
\end{align}

\begin{theorem}\label{Main-results}
(\textbf{Theoretical results}) Assume \textbf{CIP} holds. Let
$\{\eta^{\epsilon_0}_t
\}_{t\in\mathbb{R}}$  be a family of probability measures on $\mathbb{X}$ for $\epsilon_0\in[a,b]$, and $\{\eta^{\epsilon_n}_t
\}_{t\in\mathbb{R}}$  be an evolution system of probability measures of $(P^{\epsilon_n}_{\tau,t})_{t\geq\tau}$ on $\mathbb{X}$ for $n\in\mathbb{N}$.
If $(P^{\epsilon_0}_{\tau,t})_{t\geq\tau}$ is Feller, $\epsilon_n\rightarrow\epsilon_0$, and
$\eta^{\epsilon_n}_t\rightarrow\eta^{\epsilon_0}_t$ weakly for each $t\in\mathbb{R}$,  then
$\{\eta^{\epsilon_0}_t
\}_{t\in\mathbb{R}}$ must be an evolution system of probability measures of $(P^{\epsilon_0}_{\tau,t})_{t\geq\tau}$.
\end{theorem}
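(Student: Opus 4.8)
The plan is to recast the defining relation of an evolution system of probability measures in its dual form and then pass to the limit along $\epsilon_n\to\epsilon_0$. By the definition of $Q^\epsilon_{\tau,t}$ together with Fubini's theorem, the identity $Q^\epsilon_{\tau,t}\eta_\tau=\eta_t$ is equivalent to
\[
\int_{\mathbb{X}}\varphi(x)\,\eta_t(dx)=\int_{\mathbb{X}}(P^\epsilon_{\tau,t}\varphi)(x)\,\eta_\tau(dx)
\]
holding for every $\varphi\in B_b(\mathbb{X})$ and every $t\geq\tau$. Since bounded Lipschitz functions are measure-determining on a metric space, it suffices to verify this identity for $\epsilon_0$ and all bounded Lipschitz $\varphi$; this restriction is what will make the uniform-continuity estimates below genuinely uniform. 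First I would write the identity for each $\epsilon_n$ and compare it with the target identity for $\epsilon_0$, so the whole problem reduces to passing to the limit on the right-hand side, i.e. showing $\int(P^{\epsilon_n}_{\tau,t}\varphi)\,d\eta^{\epsilon_n}_\tau\to\int(P^{\epsilon_0}_{\tau,t}\varphi)\,d\eta^{\epsilon_0}_\tau$, while the left-hand side converges to $\int\varphi\,d\eta^{\epsilon_0}_t$ directly from the weak convergence $\eta^{\epsilon_n}_t\to\eta^{\epsilon_0}_t$.

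To treat the right-hand side I would split it as
\[
\int(P^{\epsilon_n}_{\tau,t}\varphi-P^{\epsilon_0}_{\tau,t}\varphi)\,d\eta^{\epsilon_n}_\tau+\Big(\int P^{\epsilon_0}_{\tau,t}\varphi\,d\eta^{\epsilon_n}_\tau-\int P^{\epsilon_0}_{\tau,t}\varphi\,d\eta^{\epsilon_0}_\tau\Big).
\]
The second bracket tends to $0$ because the Feller property guarantees $P^{\epsilon_0}_{\tau,t}\varphi\in C_b(\mathbb{X})$ and $\eta^{\epsilon_n}_\tau\to\eta^{\epsilon_0}_\tau$ weakly. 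The first term is the crux. Since $\eta^{\epsilon_n}_\tau\to\eta^{\epsilon_0}_\tau$ weakly, Prokhorov's theorem in the Polish space $\mathbb{X}$ shows the sequence $\{\eta^{\epsilon_n}_\tau\}$ is tight, so for any $\rho>0$ there is a compact $\mathbf{K}$ with $\sup_n\eta^{\epsilon_n}_\tau(\mathbb{X}\setminus\mathbf{K})<\rho$; the integral over $\mathbb{X}\setminus\mathbf{K}$ is then at most $2\|\varphi\|_\infty\rho$, while on $\mathbf{K}$ it is controlled by $\sup_{x\in\mathbf{K}}|P^{\epsilon_n}_{\tau,t}\varphi(x)-P^{\epsilon_0}_{\tau,t}\varphi(x)|$.

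The key estimate is therefore $\sup_{x\in\mathbf{K}}|P^{\epsilon_n}_{\tau,t}\varphi(x)-P^{\epsilon_0}_{\tau,t}\varphi(x)|\to 0$. Using that $X^{\epsilon_n}(t,\tau,x)$ and $X^{\epsilon_0}(t,\tau,x)$ are driven by the same noise, I would bound this difference by $\sup_{x\in\mathbf{K}}\mathbb{E}|\varphi(X^{\epsilon_n}(t,\tau,x))-\varphi(X^{\epsilon_0}(t,\tau,x))|$ and split each expectation over the event $\{\|X^{\epsilon_n}(t,\tau,x)-X^{\epsilon_0}(t,\tau,x)\|_\mathbb{X}\geq\delta\}$ and its complement. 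On the complement the Lipschitz bound gives a contribution at most $L_\varphi\delta$, uniformly in $x$ and $\omega$; on the event itself the contribution is at most $2\|\varphi\|_\infty\sup_{x\in\mathbf{K}}\mathbb{P}(\|X^{\epsilon_n}(t,\tau,x)-X^{\epsilon_0}(t,\tau,x)\|_\mathbb{X}\geq\delta)$, which vanishes as $n\to\infty$ precisely by \textbf{CIP}. Hence $\limsup_n$ of the first term is at most $2\|\varphi\|_\infty\rho+L_\varphi\delta$, and letting $\delta\to 0$ (with $\mathbf{K}$ fixed) and then $\rho\to 0$ forces it to $0$. Combining the three limits yields the dual identity for $\epsilon_0$ and all bounded Lipschitz $\varphi$, whence $Q^{\epsilon_0}_{\tau,t}\eta^{\epsilon_0}_\tau=\eta^{\epsilon_0}_t$ for all $t\geq\tau$. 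The hard part is exactly this uniform-on-compacts estimate: it is where the coupling of the two processes, the reduction to Lipschitz test functions (to obtain uniformity on the ``good'' event), the uniform convergence in probability from \textbf{CIP}, and the tightness needed to discard the mass outside a compact set must all be brought to bear simultaneously.
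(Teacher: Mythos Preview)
Your proof is correct and follows the same overall architecture as the paper: the same three-term decomposition, the Feller property to handle $\int P^{\epsilon_0}_{\tau,t}\varphi\,d(\eta^{\epsilon_n}_\tau-\eta^{\epsilon_0}_\tau)$, Prokhorov tightness of $\{\eta^{\epsilon_n}_\tau\}$ to localize to a compact $\mathbf{K}$, and the split of $\mathbb{E}|\varphi(X^{\epsilon_n})-\varphi(X^{\epsilon_0})|$ over the event $\{\|X^{\epsilon_n}-X^{\epsilon_0}\|\geq\delta\}$ and its complement, invoking \textbf{CIP} on the former.

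The one genuine difference is your restriction to bounded Lipschitz test functions, whereas the paper works with all of $C_b(\mathbb{X})$. On the ``good'' event $\{\|X^{\epsilon_n}-X^{\epsilon_0}\|<\delta\}$ you simply use the global Lipschitz bound $|\varphi(y)-\varphi(z)|\leq L_\varphi\|y-z\|$, which is uniform in $y,z\in\mathbb{X}$. The paper instead proves by a compactness argument that $\varphi$ is uniformly continuous on $\mathbf{K}$ and then applies this modulus to $\varphi(X^{\epsilon_n}(t,\tau,x))-\varphi(X^{\epsilon_0}(t,\tau,x))$; but those values need not lie in $\mathbf{K}$ even when $x\in\mathbf{K}$, so that step is, as written, not fully justified. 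Your Lipschitz restriction sidesteps this issue entirely and costs nothing, since bounded Lipschitz functions are measure-determining on a metric space. In short: same strategy, but your choice of test-function class yields a cleaner and more rigorous treatment of the key estimate.
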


\begin{remark}
(i) We only require $(P^{\epsilon}_{\tau,t})_{t\geq\tau}$ is Feller at $\epsilon=\epsilon_0$. (ii) The convergence in probability in \eqref{CIP} is not necessary to be uniform for $t$. But for most stochastic systems, \eqref{CIP} can be proved uniformly for $t$:
\begin{align*}
&\lim_{\epsilon\rightarrow\epsilon_0}
\sup_{x\in \mathbf{K}}
\mathbb{P}\bigg(\Big\{\omega\in\Omega:
\sup_{t\in[\tau,\tau+T]}\|X^\epsilon(t,\tau,x)-
X^{\epsilon_0}(t,\tau,x)\|_\mathbb{X}\geq \delta \Big\}\bigg)=0.
\end{align*}
\end{remark}

Theorem \ref{Main-results} can be viewed as an extension version of limiting stability of invariant measures of time homogeneous transition operators from the autonomous to the nonautonomous framework.

\subsection{Application of theoretical results}
Our abstract result in Theorem \ref{Main-results}  is expected to be applied to various stochastic ODEs/PDEs/lattice systems with noise perturbations.
In particular, we apply this abstract result to a stochastic lattice reaction-diffusion equation driven by a
 \emph{nonlinear} noise on $\mathbb{Z}$
  for $t>\tau$ with $\tau \in \mathbb{R}$:
\begin{align}\label{AA1}
 & du^\epsilon_i(t)+\lambda u^\epsilon_i(t)dt-\nu\big(u^\epsilon_{i}(t)
 -2u^\epsilon_{i-1}(t) +u^\epsilon_{i+1}(t)\big)dt
+|u^\epsilon_i(t)|^{p-2}|u^\epsilon_i(t)|dt
 =\epsilon\sigma(t,u^\epsilon_i(t)) dW(t),
\end{align}
with initial condition $u^\epsilon_i(\tau)
=u_{0,i}$, where $\lambda,\nu>0$, $p>2$, $\epsilon>0$,
 $W$ is a  two-sided, real-valued
Wiener process on
$(\Omega,\mathcal{F}, \{\mathcal{F}_t\}_{t\in\mathbb{R}},
\mathbb{P})$ the
complete filtered probability space, and the
nonlinear function $\sigma(t,\cdot):
\mathbb{R}\rightarrow\mathbb{R}$ is
locally Lipschitz  such that $|\sigma(t,s)|\leq
\delta|s|+g(t)$ for a constant $\delta>0$ and a time-dependent
deterministic function $g$ satisfying certain conditions. We say the noise in
\eqref{AA1} is a time-dependent nonlinear noise just because the diffusion function $\sigma$ depends on time $t$, and is nonlinear in the unknown function $u^\epsilon_i$.

Note that lattice systems can be regarded as space discretization versions of PDEs that have many applications in the real world such as electric circuits, pattern formation, propagation of nerve pulse, chemical reaction and others.
The existence and limiting stability of invariant probability measures for \emph{autonomous} stochastic
lattice systems has been considered recently in
\cite{Chenzhang1,Chenzhang2,LIding0,LIding1,bwang}. The reader is referred to \cite{Caraball1,Caraball2,Caraball3,
Caraball4,Han1,Han2,Zhou2012jdde,Zhou2017Random} for other mathematical topics such as random attractors for stochastic
lattice systems.
In this paper we study existence and limiting stability of
evolution systems of
probability measures for the \emph{nonautonomous} stochastic
lattice system
\eqref{AA1}. To our knowledge, it seems that this is the first time to study evolution systems of
probability measures for  stochastic lattice systems.

\begin{theorem}\label{tightness}(\textbf{Existence of evolution system
of probability measures})
If  $\int_{-\infty}^{0}
e^{\lambda r}\|g(r)\|^2
dr<\infty$, then
the transition operator $(P^\epsilon_{\tau,t})_{t\geq\tau}$ for
\eqref{AA1} has an
evolution system
of probability measures $\{\mu^\epsilon_t
\}_{t\in\mathbb{R}}$ on $\ell^2$ for any $\epsilon\in\big[0,\frac{\sqrt{\lambda}}
{2\delta}\big]$.
\end{theorem}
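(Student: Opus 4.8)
The plan is to construct $\{\mu^\epsilon_t\}_{t\in\mathbb{R}}$ as a pullback limit of the laws of solutions started at $-\infty$, the existence of such limits being guaranteed by tightness. First I would derive the basic energy estimate by applying It\^o's formula to $\|u^\epsilon(t,\tau,u_0)\|^2$ in $\ell^2$. The linear damping contributes $-2\lambda\|u^\epsilon\|^2$, the discrete Laplacian is non-positive in the $\ell^2$ inner product, and the monotone reaction term contributes $-2\|u^\epsilon\|_{\ell^p}^p\leq0$; meanwhile the It\^o correction from the noise is controlled by the growth bound $\|\sigma(t,u^\epsilon)\|^2\leq2\delta^2\|u^\epsilon\|^2+2\|g(t)\|^2$. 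The restriction $\epsilon\leq\frac{\sqrt{\lambda}}{2\delta}$ forces $2\lambda-2\epsilon^2\delta^2\geq\frac{3}{2}\lambda>\lambda$, so after taking expectations (the stochastic integral being a martingale) and invoking Gronwall's inequality I expect
\begin{align*}
\mathbb{E}\|u^\epsilon(t,\tau,u_0)\|^2\leq e^{-\lambda(t-\tau)}\|u_0\|^2+2\epsilon^2e^{-\lambda t}\int_{-\infty}^{t}e^{\lambda r}\|g(r)\|^2\,dr,
\end{align*}
which, by the hypothesis $\int_{-\infty}^{0}e^{\lambda r}\|g(r)\|^2\,dr<\infty$, is bounded uniformly over $\tau\leq t$; taking $u_0=0$ even kills the first term.

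Since bounded subsets of $\ell^2$ are not precompact, the second step is a uniform tail estimate to recover compactness. I would fix a smooth cut-off $\rho:\mathbb{R}\to[0,1]$ vanishing on $[-1,1]$ and equal to $1$ outside $[-2,2]$, set $\rho_k(i)=\rho(i/k)$, and apply It\^o's formula to the truncated energy $\sum_{i\in\mathbb{Z}}\rho_k(i)|u^\epsilon_i|^2$. The cross terms generated by the discrete Laplacian yield a commutator bounded by $\frac{C}{k}\|u^\epsilon\|^2$, already controlled by the first step, while the noise contributes only a tail portion of $\|g(t)\|^2$, which is small for large $k$ because $g(t)\in\ell^2$. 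Repeating the dissipative argument on this weighted energy should give
\begin{align*}
\lim_{k\to\infty}\ \sup_{\tau\leq t}\ \mathbb{E}\sum_{|i|\geq k}|u^\epsilon_i(t,\tau,0)|^2=0.
\end{align*}
Together with the uniform $\ell^2$ bound, Chebyshev's inequality and the characterisation of precompact sets in $\ell^2$ (boundedness plus uniformly small tails) then show that the family of laws $\{Q^\epsilon_{\tau,t}\delta_0:\tau\leq t\}$ is tight in $\mathcal{P}(\ell^2)$ for each fixed $t$.

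With tightness in hand I would build the evolution system by a nonautonomous Krylov--Bogolyubov argument. Choosing a countable dense set of times and using tightness together with a diagonal extraction, I obtain a single sequence $\tau_n\to-\infty$ along which $Q^\epsilon_{\tau_n,t}\delta_0\rightharpoonup\mu^\epsilon_t$ for every such time, and I would extend to all $t$. To verify the defining identity $Q^\epsilon_{\tau,t}\mu^\epsilon_\tau=\mu^\epsilon_t$ I would exploit the cocycle property $Q^\epsilon_{\tau_n,t}=Q^\epsilon_{\tau,t}Q^\epsilon_{\tau_n,\tau}$ and the Feller property: for $\varphi\in C_b(\ell^2)$ one has $P^\epsilon_{\tau,t}\varphi\in C_b(\ell^2)$, so
\begin{align*}
\int_{\ell^2}\varphi\,d\mu^\epsilon_t
&=\lim_{n\to\infty}\int_{\ell^2}\varphi\,d\big(Q^\epsilon_{\tau,t}Q^\epsilon_{\tau_n,\tau}\delta_0\big)
=\lim_{n\to\infty}\int_{\ell^2}(P^\epsilon_{\tau,t}\varphi)\,d\big(Q^\epsilon_{\tau_n,\tau}\delta_0\big)\\
&=\int_{\ell^2}(P^\epsilon_{\tau,t}\varphi)\,d\mu^\epsilon_\tau
=\int_{\ell^2}\varphi\,d\big(Q^\epsilon_{\tau,t}\mu^\epsilon_\tau\big),
\end{align*}
which is exactly $Q^\epsilon_{\tau,t}\mu^\epsilon_\tau=\mu^\epsilon_t$. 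The Feller property itself follows from well-posedness and the continuous dependence of solutions on initial data in $L^2(\Omega;\ell^2)$.

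I expect the main obstacle to be the tail estimate of the second step: one must show that the commutator produced by the discrete Laplacian under the spatial weight $\rho_k$ is genuinely of order $1/k$ and that the nonlinear noise contributes only a vanishing tail, all uniformly as $\tau\to-\infty$. This is precisely where the smallness condition $\epsilon\leq\frac{\sqrt{\lambda}}{2\delta}$ and the weighted integrability of $g$ are indispensable. A secondary difficulty is guaranteeing that the subsequential limits are mutually consistent across all times, so as to produce a genuine evolution system rather than an unrelated family of measures; this is resolved by the Feller property and the weak continuity it provides.
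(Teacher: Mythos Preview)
Your proposal is correct and reaches the result, but the construction is genuinely different from the paper's. You work with \emph{pointwise} (in $\tau$) energy and tail bounds to show that the raw family of laws $\{Q^\epsilon_{\tau,t}\delta_0:\tau\le t\}$ is tight, then extract a single pullback subsequence $\tau_n\to-\infty$ and use Feller plus the cocycle identity to glue the limits into an evolution system. The paper instead runs the \emph{extended Krylov--Bogolyubov} scheme of Da~Prato--R\"ockner: it forms the time-averaged measures
\[
\eta^\epsilon_{k,m}=\frac{1}{k-m}\int_{-k}^{-m}\mathrm{Law}\big(u^\epsilon(-m,\tau,0)\big)\,d\tau,
\]
proves their tightness in $k$ from the \emph{time-averaged} estimates of Lemma~3.1, passes to a weak limit $\eta^\epsilon_m$ as $k\to\infty$, and then defines $\mu^\epsilon_t:=Q^\epsilon_{-m,t}\eta^\epsilon_m$, checking afterwards that this is independent of $m$. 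Your direct pullback is more elementary here because the dissipative bound and the cut-off tail bound already hold pointwise in $\tau$ (indeed these are exactly the paper's (3.4)--(3.5) before averaging), so no Ces\`aro step is needed; it also sidesteps the ``independence of $m$'' verification. What the paper's route buys is methodological uniformity: the very same time-averaged estimates of Lemma~3.1 are recycled in the proof of Theorem~1.3 to obtain tightness of $\bigcup_\epsilon\mathcal E^\epsilon_t$, so setting things up in averaged form from the start pays off later. One small caveat in your write-up: the hypothesis only controls $\int_{-\infty}^0 e^{\lambda r}\|g(r)\|^2\,dr$, so your uniform bound at arbitrary $t>0$ implicitly uses local square-integrability of $g$; the paper avoids this by carrying out all the tightness at negative integer times $-m$ and only afterwards pushing forward via $Q^\epsilon_{-m,t}$.
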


Let
$\mathcal{E}^\epsilon_t=\big\{
\mu^{\epsilon}_t:
\{\mu^\epsilon_t\}_{t\in\mathbb{R}} \ \mbox{is an evolution system of probability measures of $(P^{\epsilon}_{\tau,t})_{t\geq\tau}$ for
\eqref{AA1}}
\big\}$ for $t\in\mathbb{R}$ and $\epsilon\in\big[0,\frac{\sqrt{\lambda}}
{2\delta}\big]$. Then we discuss the
tightness of  $\bigcup_{\epsilon\in\big[0,\frac{\sqrt{\lambda}}
{2\delta}\big]}\mathcal{E}^\epsilon_t$ and limiting stability of any sequences of  $\mathcal{E}^\epsilon_t$.

\begin{theorem}\label{union2}(\textbf{Application of theoretical results})
If $\int_{-\infty}^{t}
e^{\lambda r}\|g(r)\|^2
dr<\infty$ for each
$t\in\mathbb{R}$, then we have the following two results.

(i) The union $\bigcup_{\epsilon\in\big[0,\frac{\sqrt{\lambda}}
{2\delta}\big]
}\mathcal{E}_t^\epsilon$ is tight on $\ell^2$ for each $t\in\mathbb{R}$.

(ii)If $\mu^{\epsilon_n}_t\in
\mathcal{E}_t^{\epsilon_n}$ with $\epsilon_n\rightarrow\epsilon_0\in\big[0,\frac{\sqrt{\lambda}}
{2\delta}\big]$, then there exists $\mu^{\epsilon_0}_t\in
\mathcal{E}^{\epsilon_0}_t$  and a subsequence (not relabeled)
such that $\mu^{\epsilon_n}_t\rightarrow\mu^{\epsilon_0}_t$ weakly for each $t\in\mathbb{R}$.
\end{theorem}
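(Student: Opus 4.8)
The plan is to derive part (i) from two uniform-in-$\epsilon$ a priori estimates for the process $u^\epsilon(t,\tau,u_0)$ solving \eqref{AA1}, and then to obtain part (ii) from part (i) together with the abstract Theorem~\ref{Main-results}. For the first estimate I would apply It\^o's formula to $\|u^\epsilon(t,\tau,u_0)\|^2_{\ell^2}$. The drift yields $-2\lambda\|u^\epsilon\|^2$, a nonpositive term from the positive semidefinite discrete Laplacian $A$, and a nonpositive term from the dissipative reaction, while the It\^o correction contributes $\epsilon^2\|\sigma(t,u^\epsilon)\|^2$. Bounding $\|\sigma(t,u^\epsilon)\|^2\leq 2\delta^2\|u^\epsilon\|^2+2\|g(t)\|^2$ via $|\sigma(t,s)|\leq\delta|s|+g$, the restriction $\epsilon\leq\tfrac{\sqrt\lambda}{2\delta}$ lets $2\epsilon^2\delta^2\|u^\epsilon\|^2$ be absorbed into a fraction of the dissipation $2\lambda\|u^\epsilon\|^2$. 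Taking expectations (the martingale term has zero mean) and applying Gronwall gives an exponentially decaying bound forced by $\int_\tau^t e^{\lambda s}\|g(s)\|^2\,ds$; letting $\tau\to-\infty$ produces the uniform second-moment bound $\int_{\ell^2}\|x\|^2\,\mu^\epsilon_t(dx)\leq Ce^{-\lambda t}\int_{-\infty}^t e^{\lambda s}\|g(s)\|^2\,ds$, finite by hypothesis and independent of $\epsilon$.

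The crux of part (i) is a matching uniform tail estimate. Taking a smooth cutoff $\xi$ that vanishes on $\{\,|s|\leq 1\,\}$ and equals $1$ on $\{\,|s|\geq 2\,\}$, I would apply It\^o's formula to $\sum_i\xi(i/k)|u^\epsilon_i|^2$. Discrete summation by parts shows the Laplacian contributes a favorable sign plus an error of order $\tfrac1k\|u^\epsilon\|^2$, since $\xi((i+1)/k)-\xi(i/k)=O(1/k)$, and this error is controlled uniformly by the first estimate; the forcing reduces to the tail $\sum_{|i|\geq k}|g_i(s)|^2$, which vanishes as $k\to\infty$ by dominated convergence against the integrable majorant $e^{\lambda s}\|g(s)\|^2$. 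This gives $\sup_\epsilon\int_{\ell^2}\sum_{|i|\geq 2k}|x_i|^2\,\mu^\epsilon_t(dx)\to 0$ as $k\to\infty$. Combining the second-moment bound (through Chebyshev, to fix a ball radius) with these equi-small tails yields, for every $\eta>0$, a closed bounded set with uniformly small tails --- hence precompact in $\ell^2$ --- of $\mu^\epsilon_t$-measure at least $1-\eta$ for all $\epsilon$, which is the required tightness.

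For part (ii) I would first verify the two hypotheses of Theorem~\ref{Main-results} for \eqref{AA1}. The Feller property at $\epsilon_0$ follows from continuous dependence on the initial datum: It\^o's formula applied to $\|u^{\epsilon_0}(t,\tau,x)-u^{\epsilon_0}(t,\tau,y)\|^2$, using monotonicity of the reaction and the local Lipschitz bound on $\sigma$, shows $\mathbb{E}\|u^{\epsilon_0}(t,\tau,x)-u^{\epsilon_0}(t,\tau,y)\|^2\to 0$ as $y\to x$. For \textbf{CIP}, writing $\epsilon\sigma(t,u^\epsilon)-\epsilon_0\sigma(t,u^{\epsilon_0})=\epsilon\big(\sigma(t,u^\epsilon)-\sigma(t,u^{\epsilon_0})\big)+(\epsilon-\epsilon_0)\sigma(t,u^{\epsilon_0})$, an energy estimate on the difference $u^\epsilon-u^{\epsilon_0}$ absorbs the first term by the Lipschitz bound and Gronwall while the second is $O(|\epsilon-\epsilon_0|^2)$, uniformly for $x$ in a compact set; a stopping-time localization is needed because $\sigma$ is only locally Lipschitz, and the uniform moment bound from part (i) controls the localization. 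Chebyshev then turns this mean-square convergence into the convergence in probability \eqref{CIP}.

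Finally, with part (i) in hand, Prokhorov's theorem together with a diagonal extraction over a countable dense set $D\subseteq\mathbb{R}$ produces a subsequence along which $\mu^{\epsilon_n}_s\to\mu^{\epsilon_0}_s$ weakly for every $s\in D$. To upgrade this to all $t\in\mathbb{R}$ with a single subsequence, I would fix $t$, take any weak limit point $\nu_t$ of the tight sequence $\{\mu^{\epsilon_n}_t\}$ along a further subsequence, choose $s\in D$ with $s<t$, and pass to the limit in the evolution identity $\mu^{\epsilon_n}_t=Q^{\epsilon_n}_{s,t}\mu^{\epsilon_n}_s$. The weak convergence $Q^{\epsilon_n}_{s,t}\mu^{\epsilon_n}_s\to Q^{\epsilon_0}_{s,t}\mu^{\epsilon_0}_s$ follows from $\mu^{\epsilon_n}_s\to\mu^{\epsilon_0}_s$ through the combination of \textbf{CIP} and the Feller property (the same mechanism driving Theorem~\ref{Main-results}), so $\nu_t=Q^{\epsilon_0}_{s,t}\mu^{\epsilon_0}_s$ is independent of the further subsequence; hence the whole subsequence satisfies $\mu^{\epsilon_n}_t\to\mu^{\epsilon_0}_t$ weakly for every $t$. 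With this all-$t$ convergence established, Theorem~\ref{Main-results} applies verbatim and yields that $\{\mu^{\epsilon_0}_t\}_{t\in\mathbb{R}}$ is an evolution system of probability measures of $(P^{\epsilon_0}_{\tau,t})_{t\geq\tau}$, i.e. $\mu^{\epsilon_0}_t\in\mathcal{E}^{\epsilon_0}_t$. I expect the two genuine obstacles to be the uniform-in-$\epsilon$ tail estimate of part (i) and the verification of \textbf{CIP} in part (ii), the latter complicated by the merely local Lipschitz continuity of the time-dependent diffusion $\sigma$.
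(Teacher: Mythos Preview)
Your approach to part~(ii) is essentially that of the paper: verify \textbf{CIP} by a stopping-time localization (needed because $\sigma$ is only locally Lipschitz), verify the Feller property, and then combine the tightness from~(i) with Prokhorov and Theorem~\ref{Main-results}. The diagonal-plus-identification argument you sketch to obtain a single subsequence valid for all $t$ is a reasonable elaboration of what the paper leaves implicit.

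There is, however, a genuine gap in your plan for part~(i). You write that ``letting $\tau\to-\infty$ produces the uniform second-moment bound $\int_{\ell^2}\|x\|^2\,\mu^\epsilon_t(dx)\leq C e^{-\lambda t}\int_{-\infty}^t e^{\lambda s}\|g(s)\|^2\,ds$'', and similarly for the tail. But an \emph{arbitrary} evolution system $\{\mu^\epsilon_t\}_{t\in\R}\in\mathcal{E}^\epsilon$ is \emph{not} assumed to arise as the law of $u^\epsilon(t,\tau,u_0)$ for some fixed $u_0$ with $\tau\to-\infty$; all you know is $\mu^\epsilon_t=Q^\epsilon_{\tau,t}\mu^\epsilon_\tau$. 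Your energy estimate gives, after integrating against $\mu^\epsilon_\tau$,
\[
\int_{\ell^2}\|y\|^2\,\mu^\epsilon_t(dy)\ \leq\ e^{\lambda(\tau-t)}\int_{\ell^2}\|x\|^2\,\mu^\epsilon_\tau(dx)+C(t),
\]
and to send $\tau\to-\infty$ you would need an a~priori bound on $\int\|x\|^2\,\mu^\epsilon_\tau(dx)$, which you do not have (it need not even be finite). The same circularity obstructs your tail estimate: the cutoff computation produces a term $e^{\lambda(\tau-t)}\sum_{|i|\geq n}|x_i|^2$ which, after integrating in $\mu^\epsilon_\tau(dx)$, is again uncontrolled.

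The paper's device for breaking this circularity is precisely the \emph{time-average} method advertised in the abstract. Since $\mu^\epsilon_t(A)=\int_{\ell^2}\mathbb{P}\big(u^\epsilon(t,\tau,x)\in A\big)\,\mu^\epsilon_\tau(dx)$ for \emph{every} $\tau\leq t$, one may average this identity over $\tau\in(-k,t)$; the left side is unchanged, while on the right the pathwise estimates of Lemma~\ref{uniformestimatesC} apply with the initial datum $x$ fixed. The troublesome contribution from $\|x\|^2$ now enters through a factor $\tfrac{1}{k+t}$, so for each fixed $x$ it vanishes as $k\to\infty$, and a reverse Fatou argument (after disintegrating the $\tau$--$x$ integral) yields the desired lower bound on $\mu^\epsilon_t(\mathcal{Z}^\delta(t))$ uniformly in $\epsilon$, without ever assuming finite moments for $\mu^\epsilon_\tau$. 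This is the missing idea in your proposal for~(i).
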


\section{Proof of Theorem \ref{Main-results}
}

\begin{proof}\textbf{Proof of Theorem \ref{Main-results}}.
Given $\tau\in\mathbb{R}$, it is sufficient to show that for any
$\varphi\in C_b(\mathbb{X})$,
\begin{align}\label{rrrr}
&\int_\mathbb{X}\varphi(x) (Q^{\epsilon_0}_{\tau,t}
\eta^{\epsilon_0}_\tau)(dx)=
\int_\mathbb{X} (P^{\epsilon_0}_{\tau,t}\varphi)(x)
\eta^{\epsilon_0}_\tau(dx)=\int_\mathbb{X}
\varphi(x)
\eta^{\epsilon_0}_t(dx), \ \ \forall\  t\geq \tau.
\end{align}
The first equality is obvious. To prove the second one in  \eqref{rrrr} we consider the following  equality:
\begin{align}\label{ww0}
\int_\mathbb{X} (P^{\epsilon_0}_{\tau,t}\varphi)(x)
\eta^{\epsilon_0}_\tau(dx)-\int_\mathbb{X}
\varphi(x)
\eta^{\epsilon_0}_t(dx)
&=\int_\mathbb{X}(P^{\epsilon_0}_{\tau,t}\varphi)(x)
\eta^{\epsilon_0}_\tau(dx)-
\int_\mathbb{X}(P^{\epsilon_0}_{\tau,t}\varphi)(x)
\eta^{\epsilon_n}_\tau(dx)
\nonumber\\&\ \ +
\int_\mathbb{X}(P^{\epsilon_0}_{\tau,t}\varphi)(x)
\eta^{\epsilon_n}_\tau(dx)-\int_\mathbb{X}
\varphi(x)
\eta^{\epsilon_n}_t(dx)\nonumber\\&\ \ +\int_\mathbb{X}
\varphi(x)
\eta^{\epsilon_n}_t(dx)
-\int_\mathbb{X}
\varphi(x)
\eta^{\epsilon_0}_t(dx).
\end{align}
For the last line on the right-hand side of \eqref{ww0}, since $\eta^{\epsilon_n}_t\rightarrow
\eta^{\epsilon_0}_t$ weakly, we know that for every $\gamma$, there exists $N_1\in\mathbb{N}$ such that
\begin{align}\label{ww1}
\int_\mathbb{X}
\varphi(x)
\eta^{\epsilon_n}_t(dx)
-\int_\mathbb{X}
\varphi(x)
\eta^{\epsilon_0}_t(dx)\leq\gamma \ \ \mbox{for all $n\geq N_1$}.\end{align}
For the first line on the right-hand side of \eqref{ww0}, since $(P^{\epsilon_0}_{\tau,t})_{t\geq\tau}$ is Feller and $\eta^{\epsilon_n}_\tau\rightarrow
\eta^{\epsilon_0}_\tau$ weakly, we know that for every $\gamma$, there exists $N_2\in\mathbb{N}$ such that
\begin{align}\label{ww2}
\int_\mathbb{X}(P^{\epsilon_0}_{\tau,t}\varphi)(x)
\eta^{\epsilon_0}_\tau(dx)-
\int_\mathbb{X}(P^{\epsilon_0}_{\tau,t}\varphi)(x)
\eta^{\epsilon_n}_\tau(dx)\leq\gamma \ \ \mbox{for all $n\geq N_2$}.\end{align}

It remains to consider the second line on the right-hand side of \eqref{ww0}. By assumption \textbf{CIP} we know that
for every $\gamma>0$ and $\delta>0$, there exists $N_3\in\mathbb{N}$ such that
\begin{align}\label{uniformly0ab}
\sup_{x\in \mathbf{K}}
\mathbb{P}\bigg(\Big\{\omega\in\Omega:
\|X^{\epsilon_n}(t,\tau,x)-
X^{\epsilon_0}(t,\tau,x)\|_\mathbb{X}\geq \delta \Big\}\bigg)\leq\gamma \ \ \mbox{for all $n\geq N_3$}.
\end{align}
Since $\eta^{\epsilon_n}_\tau\rightarrow
\eta^{\epsilon_0}_\tau$ weakly, by Prohorov theorem (see \cite{DaPrato1,DaPrato2})  we know $\{\eta^{\epsilon_n}_\tau\}_{n=1}^\infty$ is tight on $\mathbb{X}$. This means that for every $\gamma>0$, there exists a compact set $\mathbf{K}(\gamma,\tau)\subseteq\mathbb{X}$ such that
\begin{align}\label{uniformly0}
\eta^{\epsilon_n}_\tau(\mathbb{X}/\mathbf{K}(\gamma,\tau))
<\gamma\ \  \mbox{for all $n\in\mathbb{N}$}.
\end{align}
By $\varphi\in C_b(\mathbb{X})$ and the compactness of $\mathbf{K}$ we know that
for every $\gamma>0$, there exists $\eta>0$ such that
\begin{align}\label{uniformly0a}
|\varphi(y)-\varphi(z)|<\gamma\ \ \mbox{for all $y,z\in \mathbf{K}$\ with \ $\|y-z\|_\mathbb{X}\leq\eta$}.
\end{align}
Indeed, if \eqref{uniformly0a} is false, then
there exist $\gamma_0>0$, $y_n, z_n\in\mathbf{K}$
with $\|y_n-z_n\|_\mathbb{X}\leq1/n$ such that
$|\varphi(y_n)-\varphi(z_n)|\geq\gamma_0$. Since
$\mathbf{K}$ is a compact subset of $\mathbb{X}$, there exist $x\in\mathbf{K}$ and a subsequence $\{n_k\}_{k=1}^\infty\subseteq \{n\}_{n=1}^\infty$ such that $\lim_{k\rightarrow\infty}
\|y_{n_k}-x\|_\mathbb{X}=0$
. Then one can verify $\lim_{k\rightarrow\infty}
\|z_{n_k}-x\|_\mathbb{X}=0$.
Since $\varphi$ is continuous, letting $k\rightarrow\infty$ in
$\gamma_0\leq|\varphi(y_{n_k})-\varphi(z_{n_k})|
\leq|\varphi(y_{n_k})-\varphi(x)|+
|\varphi(z_{n_k})-\varphi(x)|$, we find a  contradiction $\gamma_0\leq0$.

Since $\{\eta^{\epsilon_n}_t
\}_{t\in\mathbb{R}}$ is an evolution system of probability measures of $(P^{\epsilon_n}_{\tau,t})_{t\geq\tau}$ on $\mathbb{X}$, by \eqref{uniformly0ab}-\eqref{uniformly0a} we find that
for all $n\geq N:=\max\{N_1,N_2,N_3\}$,
\begin{align}\label{uniformly1}
&\int_\mathbb{X}(P^{\epsilon_0}_{\tau,t}\varphi)(x)
\eta^{\epsilon_n}_\tau(dx)-\int_\mathbb{X}
\varphi(x)
\eta^{\epsilon_n}_t(dx)
\nonumber\\&=\int_\mathbb{X} \mathbb{E}\big[
\varphi(X^{\epsilon_0}(t,\tau, x) )\big]
\eta^{\epsilon_n}_\tau(dx)-\int_\mathbb{X}
\mathbb{E}\big[
\varphi(X^{\epsilon_n}(t,\tau, x) )\big]
\eta^{\epsilon_n}_\tau(dx)
\nonumber\\&\leq\int_\mathbb{X} \mathbb{E}\big[|
\varphi(X^{\epsilon_n}(t,\tau, x) )-\varphi(X^{\epsilon_0}(t,\tau, x) )|\big]
\eta^{\epsilon_n}_\tau(dx)
\nonumber\\&\leq\int_\mathbb{\mathbf{K}(\gamma,\tau)} \mathbb{E}\big[|
\varphi(X^{\epsilon_n}(t,\tau, x) )-\varphi(X^{\epsilon_0}(t,\tau, x) )|\big]
\eta^{\epsilon_n}_\tau(dx)+2 \sup_{x\in\mathbb{X}}|\varphi(x)|
\eta^{\epsilon_n}_\tau(\mathbb{X}/
\mathbf{K}(\gamma,\tau))
\nonumber\\&\leq\int_\mathbb{\mathbf{K}(\gamma,\tau)} \mathbb{E}\big[|
\varphi(X^{\epsilon_n}(t,\tau, x) )-\varphi(X^{\epsilon_0}(t,\tau, x) )|\big]
\eta^{\epsilon_n}_\tau(dx)+2\gamma \sup_{x\in\mathbb{X}}|\varphi(x)|
\nonumber\\&\leq\int_\mathbb{\mathbf{K}(\gamma,\tau)} \int_{\{\omega\in\Omega:\|
X^{\epsilon_n}(t,\tau, x) -X^{\epsilon_0}(t,\tau, x) \|_\mathbb{X}\geq\eta\}}\big|
\varphi(X^{\epsilon_n}(t,\tau, x) )-\varphi(X^{\epsilon_0}(t,\tau, x) )\big|\mathbb{P}(d\omega)
\eta^{\epsilon_n}_\tau(dx)
\nonumber\\&\ \ +\int_\mathbb{\mathbf{K}(\gamma,\tau)} \int_{\{\omega\in\Omega:\|
X^{\epsilon_n}(t,\tau, x) -X^{\epsilon_0}(t,\tau, x) \|_\mathbb{X}<\eta\}}\big|
\varphi(X^{\epsilon_n}(t,\tau, x) )-\varphi(X^{\epsilon_0}(t,\tau, x) )\big|\mathbb{P}(d\omega)
\eta^{\epsilon_n}_\tau(dx)
\nonumber\\&\ \  +2\gamma \sup_{x\in\mathbb{X}}|\varphi(x)|
\nonumber\\&\leq2 \sup_{x\in\mathbb{X}}|\varphi(x)|\eta^{\epsilon_n}_\tau
(\mathbf{K}(\gamma,\tau))\mathbb{P}\bigg(\Big\{\omega\in\Omega:
\|u^\epsilon(t,\tau,x)-
u^{\epsilon_0}(t,\tau,x)\|_\mathbb{X}\geq\eta \Big\}\bigg)\nonumber\\&+\gamma\eta^{\epsilon_n}_\tau
(\mathbf{K}(\gamma,\tau))\mathbb{P}\bigg(\Big\{\omega\in\Omega:
\|u^\epsilon(t,\tau,x)-
u^{\epsilon_0}(t,\tau,x)\|_\mathbb{X}<\eta \Big\}\bigg)+2\gamma \sup_{x\in\mathbb{X}}|\varphi(x)|
\nonumber\\&\leq\gamma(1+4 \sup_{x\in\mathbb{X}}|\varphi(x)|).
\end{align}

Thus, as a result
of \eqref{ww1}-\eqref{ww2} and \eqref{uniformly1} we have
\begin{align*}\label{ww0}
\int_\mathbb{X} (P^{\epsilon_0}_{\tau,t}\varphi)(x)
\eta^{\epsilon_0}_\tau(dx)-\int_\mathbb{X}
\varphi(x)
\eta^{\epsilon_0}_t(dx)\leq \gamma(3+4 \sup_{x\in\mathbb{X}}|\varphi(x)|).
\end{align*}
Since $\gamma>0$ is arbitrary, we thus complete the proof.
\end{proof}

\section{Proof of Theorem \ref{tightness}
}
Let us consider the Banach space
$\ell^q=\big\{u=(u_i)_{i
\in\mathbb{Z}}: \sum_{i\in\mathbb{Z}}|u_i|^q<+\infty \big\}$ endowed with the norm
\begin{equation*}
\|u\|_q=\left\{
          \begin{array}{ll}
            \big(\sum_{i\in\mathbb{Z}}|u_i|^q
\big)^{1/q}, & \hbox{if $q\in[1,\infty)$,}
\\[6pt]\ \sup_{i\in\mathbb{Z}}|u_i|, & \hbox{if $q=\infty$.}
          \end{array}
        \right.
\end{equation*}
In particular, we write $\|\cdot\|=\|\cdot\|_2$. It is known from Wang \cite{bwang} that
for each $\tau\in \mathbb{R}$ and $u_{0}\in L^2(\Omega,\mathcal{F}_\tau; \ell^2)$, equation \eqref{AA1} has a unique solution which is
continuous $\ell^2$-valued
$\mathcal{F}_t$-adapted Markov process $u(\cdot,\tau,u_0)\in L^2(\Omega, C([\tau,\infty), \ell^2  ))\cap L^p(\Omega, L^p(\tau,\infty);  \ell^p )) $.
Then we can prove that the transition operator
$(P^\epsilon_{\tau,t})_{t\geq\tau}$ for $u(t,\tau,u_0)$ is Feller, and the process laws hold:
$P^\epsilon_{\tau,t}=
P^\epsilon_{\tau,r}P^\epsilon_{r,t}$ and
$Q^\epsilon_{\tau,t}=
Q^\epsilon_{r,t}Q_{\tau,r}^\epsilon$,
$-\infty<\tau\leq r\leq t<+\infty$. 

Next, we derive different uniform estimates of time average of the solutions.
\begin{lemma}\label{uniformestimatesC}
If $\int_{-\infty}^{t}
e^{\lambda r}\|g(r)\|^2
dr<\infty$ for each
$t\in\mathbb{R}$, then we have the following results.

(i) For each $t\in\mathbb{R}$ and  $\mathbb{N}\ni k>-t$,
\begin{align}&\label{DD2AA32Aswwerr}
\sup_{\epsilon\in\big[0,\frac{\sqrt{\lambda}}
{2\delta}\big]}\frac{1}{k+t}\int_{-k}^{t}
\mathbb{E} [\|u^\epsilon(t,\tau,u_0)\|^{2}]d\tau +\sup_{\epsilon\in\big[0,\frac{\sqrt{\lambda}}
{2\delta}\big]}\frac{1}{k+t}\int_{-k}^{t}
\int_{\tau}^{t}
e^{\lambda(r-t)}\mathbb{E}\left
[\|u^\epsilon(r,\tau,u_0)\|^2\right]drd\tau
\nonumber\\&\leq\frac{c}{(k+t)}\mathbb{E} \big[\|u_0\|^{2}\big]+
c\int_{-\infty}^{t}e^{\lambda
(r-t)}\|g(r)\|^2dr,
\end{align}
where $c>0$ is a constant independent of
$\epsilon$, $t$, $k$ and $u_0$.

(ii) For each $t\in\mathbb{R}$
and   compact set
$\mathbf{K}$ of $\ell^2$, the  solutions satisfy
\begin{align*}
\lim_{n\rightarrow\infty}\sup_{\epsilon\in\big[0,\frac{\sqrt{\lambda}}
{2\delta}\big]}
\sup_{\mathbb{N}\ni k>-t}\sup_{u_0\in\mathbf{K}}
\frac{1}{k+t}\int_{-k}^{t}\sum_{|i|\geq n}
\mathbb{E} [|u^\epsilon(t,\tau,u_0)|^2]d\tau=0.
\end{align*}

(iii) For each $t\in\mathbb{R}$ and bounded set
$\mathbf{B}$ of $\ell^2$, the solutions satisfy
\begin{align*}
\lim_{n,k\rightarrow\infty}\sup_{\epsilon\in\big[0,\frac{\sqrt{\lambda}}
{2\delta}\big]}
\sup_{u_0\in\mathbf{B}}
\frac{1}{k+t}\int_{-k}^{t}\sum_{|i|\geq n}
\mathbb{E} [|u^\epsilon(t,\tau,u_0)|^2]d\tau=0.
\end{align*}

\end{lemma}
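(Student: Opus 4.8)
The plan is to run the It\^o energy method throughout, keeping every constant manifestly independent of $\epsilon$, $t$, $k$ and $u_0$, and to feed the output of part (i) into parts (ii)--(iii).

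For part (i) I would apply the It\^o formula to $e^{\lambda r}\|u^\epsilon(r,\tau,u_0)\|^2$ on $[\tau,t]$. Pairing the equation with $u^\epsilon$ in $\ell^2$, the damping contributes $-\lambda\|u^\epsilon\|^2$, the discrete coupling term is dissipative (nonpositive in the $\ell^2$ pairing, as usual for the lattice Laplacian), and the reaction term $-|u^\epsilon|^{p-2}u^\epsilon$ is also nonpositive; these are balanced against the It\^o correction $\epsilon^2\sum_i\sigma(r,u_i^\epsilon)^2$. The growth bound $|\sigma(t,s)|\leq\delta|s|+g(t)$ gives $\epsilon^2\sum_i\sigma(r,u_i^\epsilon)^2\leq 2\epsilon^2\delta^2\|u^\epsilon\|^2+2\epsilon^2\|g(r)\|^2$, and the restriction $\epsilon\leq\frac{\sqrt{\lambda}}{2\delta}$ forces $2\epsilon^2\delta^2\leq\frac{\lambda}{2}$, so the net $\|u^\epsilon\|^2$-drift is at most $-\frac{3\lambda}{2}\|u^\epsilon\|^2$. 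After the factor $e^{\lambda r}$ this leaves a coefficient $-\frac{\lambda}{2}e^{\lambda r}$ on $\|u^\epsilon\|^2$; taking expectations (the stochastic integral being a martingale) and multiplying by $e^{-\lambda t}$ yields, for each fixed $\tau$,
\begin{align*}
\mathbb{E}\|u^\epsilon(t,\tau,u_0)\|^2 & +\frac{\lambda}{2}\int_\tau^t e^{\lambda(r-t)}\,\mathbb{E}\|u^\epsilon(r,\tau,u_0)\|^2\,dr \\
& \leq e^{-\lambda(t-\tau)}\,\mathbb{E}\|u_0\|^2+2\epsilon^2\int_\tau^t e^{\lambda(r-t)}\|g(r)\|^2\,dr .
\end{align*}
Integrating in $\tau$ over $[-k,t]$, dividing by $k+t$, using $\int_{-k}^t e^{-\lambda(t-\tau)}\,d\tau\leq\lambda^{-1}$ for the initial term and Fubini for the forcing term (the inner $\tau$-measure being at most $k+t$), gives (i), with $c$ absorbing $\lambda^{-1}$ and $2\epsilon^2\leq\frac{\lambda}{2\delta^2}$.

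For (ii) and (iii) I would localize in space. Fix $\theta\in C^1(\R)$ with $0\leq\theta\leq1$, $\theta\equiv0$ on $[-1,1]$ and $\theta\equiv1$ off $[-2,2]$, and set $\theta_{n,i}=\theta(|i|/n)$. Applying It\^o to the truncated energy $\sum_i\theta_{n,i}|u_i^\epsilon(r)|^2$ and summing by parts in the coupling term, the increments $\theta_{n,i}-\theta_{n,i\pm1}=O(1/n)$ (uniformly, since $\theta'$ is bounded) produce an error bounded by $\frac{C\nu}{n}\|u^\epsilon\|^2$; the damping and reaction again have favourable signs, and the noise term is treated exactly as in (i) but with $\sum_{|i|\geq n}g_i(r)^2$ in place of $\|g(r)\|^2$. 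Repeating the integrating-factor / expectation / $\tau$-averaging procedure then bounds $\frac{1}{k+t}\int_{-k}^t\sum_{|i|\geq 2n}\mathbb{E}[|u_i^\epsilon(t,\tau,u_0)|^2]\,d\tau$ (note $\theta_{n,i}=1$ for $|i|\geq 2n$) by three terms: (A) the averaged initial tail $\frac{1}{\lambda(k+t)}\sum_{|i|\geq n}|u_{0,i}|^2$; (B) $\frac{C\nu}{n}$ times the double integral already controlled in (i); and (C) $2\epsilon^2\int_{-\infty}^t e^{\lambda(r-t)}\sum_{|i|\geq n}g_i(r)^2\,dr$. Term (B) tends to $0$ because (i) bounds its double integral uniformly in $\epsilon,k,u_0$; term (C) tends to $0$ as $n\to\infty$ by dominated convergence, since $\sum_{|i|\geq n}g_i(r)^2\downarrow0$ pointwise and is dominated by the integrable $e^{\lambda(r-t)}\|g(r)\|^2$. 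The two statements differ only in how (A) is killed: for (ii) the set $\mathbf{K}$ is compact in $\ell^2$, so by the Fr\'echet--Kolmogorov characterization its tails are uniformly small, $\sup_{u_0\in\mathbf{K}}\sum_{|i|\geq n}|u_{0,i}|^2\to0$, handling (A) uniformly in $k$ (the factor $\frac{1}{k+t}$ being bounded for $\N\ni k>-t$); for (iii) the set $\mathbf{B}$ is only bounded, so $\sup_{u_0\in\mathbf{B}}\|u_0\|^2<\infty$ and (A) $\leq\frac{C}{k+t}\to0$ as $k\to\infty$. Combining the three limits yields (ii) and (iii).

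I expect the main obstacle to be the rigorous justification of the It\^o steps: since $\sigma(t,\cdot)$ is only locally Lipschitz and the reaction has order $p>2$, the stochastic integrals should first be handled along a localizing sequence of stopping times, with the martingale property --- hence the vanishing of their expectation --- recovered in the limit from the a priori regularity $u\in L^2(\Omega,C([\tau,\infty),\ell^2))\cap L^p(\Omega,L^p(\tau,\infty);\ell^p)$ of \cite{bwang}. The other delicate point is the summation-by-parts estimate confirming that the cutoff--coupling interaction is genuinely $O(1/n)$ uniformly in the configuration; and, conceptually, the bookkeeping that routes the uniform double-integral bound of (i) into the $O(1/n)$ term of (ii)/(iii), together with the compactness-versus-averaging dichotomy that separates the two tail claims.
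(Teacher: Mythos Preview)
Your proposal is correct and follows essentially the same route as the paper: It\^o energy estimate with the $\epsilon$-restriction absorbing the noise correction to obtain the pointwise-in-$\tau$ bound, then $\tau$-averaging for (i); a smooth cutoff producing the $O(1/n)$ commutator with the discrete Laplacian for the tail estimate, fed by (i), for (ii)--(iii). Your write-up is in fact more explicit than the paper's, which simply cites \cite{bwang} for the cutoff step and does not spell out the compactness-versus-averaging dichotomy that distinguishes (ii) from (iii).
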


\begin{proof}
(i) Applying It\^{o}'s formula to \eqref{AA1}, we infer that for any $\epsilon\in\big[0,\frac{\sqrt{\lambda}}
{2\delta}\big]$,
\begin{align}\label{uniformly0aaaa}
\frac{d}{dt}\mathbb{E} \big[\|u(t)\|^2\big]
 +\frac{3}{2}\lambda\mathbb{E} \big[\|u(t)\|^2\big]
 \leq c\|g(t)\|^2,
\end{align}
where $c>0$ is a constant. Multiplying \eqref{uniformly0aaaa} by $e^{\lambda t}$ and integrating
over $(\tau,t)$, we deduce
\begin{align}\label{FFF1Ad1}
\mathbb{E} \big[\|u^\epsilon(t,\tau,u_0)\|^{2}\big]&+\frac{\lambda}{2}
\int_{\tau}^{t}
e^{\lambda(r-t)}\mathbb{E}\left
[\|u^\epsilon(r,\tau,u_0)\|^2\right]dr\nonumber\\
&\leq e^{\lambda
(\tau-t)}
\mathbb{E}\big [\|u_0\|^{2}\big]+
c\int_{-\infty}^te^{\lambda
(s-t)}\|g(s)\|^2ds.
\end{align}
Integrating \eqref{FFF1Ad1} with respect to $\tau$ over $(-k,t)$, we obtain \eqref{DD2AA32Aswwerr}.

(ii)-(iii)
By a cut-off technique as used by Wang \cite[Lemma 4.2]{bwang} (see also \cite{wang1999}) one can derive, for any $\epsilon\in\big[0,\frac{\sqrt{\lambda}}
{2\delta}\big]$,
\begin{align}\label{CC9aaa}
  \mathbb{E} \Big[\sum_{|i|\geq 2n}
 |u_i(t,\tau,u_0)|^2\Big]
& \leq e^{\lambda(\tau-t)}\sum_{|i|\geq n}
 |u_{0,i}|^2
+c\int_{\tau}^t
e^{\lambda(r-t)}\sum_{|i|\geq n}
|g_{i}(r)|^2
dr\nonumber\\&\ \ +
\frac{c}{n}\int_{\tau}^t
e^{\lambda(r-t)}\mathbb{E}\left
[\|u^\epsilon(r,\tau,u_0)\|^2\right]dr
\end{align}
Integrating \eqref{CC9aaa} with respect to $\tau$
over $(-k,t)$, we have
\begin{align*}
\frac{1}{k+t}  \int_{-k}^{t}\mathbb{E} \Big[\sum_{|i|\geq 2n}
 |u_i(t,\tau,u_0)|^2\Big]d\tau
& \leq \frac{1}{\lambda(k+t)}\sum_{|i|\geq n}
 |u_{0,i}|^2
 +c\int_{-\infty}^t
e^{\lambda(r-t)}\sum_{|i|\geq n}
|g_{i}(r)|^2
dr
\nonumber\\&\ \ +\frac{c}{n(k+t)}\int_{-k}^{t}
\int_{\tau}^{t}
e^{\lambda(r-t)}\mathbb{E}\left
[\|u^\epsilon(r,\tau,u_0)\|^2\right]drd
\tau.
\end{align*}
This along with $\int_{-\infty}^{t}
e^{\lambda r}\|g(r)\|^2
dr<\infty$ and \eqref{DD2AA32Aswwerr}
completes the proof of
(ii)-(iii).
\end{proof}

\begin{proof}\textbf{Proof of Theorem \ref{tightness}}. The proof is completed by an extended
Krylov-Bogolyubov method proposed by Da Prato and R\"{o}ckner
\cite{DaPrato3.1}.
For $n\in\mathbb{N}$, we let $\chi_{[-n,n]}$ be the characteristic
function of $[-n,n]$. Then
$u^\epsilon(t,\tau,u_0)=
\tilde{u}^\epsilon_n(t,\tau,u_0)+
 \hat{u}^\epsilon_n(t,\tau,u_0)$ with
$\tilde{u}^\epsilon_n(t,\tau,u_0)=
\big(\chi_{[-n,n]}(|i|)u^\epsilon_i(t,\tau,u_0)
\big)_{i\in\mathbb{Z}}$ and
 $\hat{u}^\epsilon_n(t,\tau,u_0)
=\big((1-\chi_{[-n,n]}(|i|))
u^\epsilon_i(t,\tau,u_0)\big)_{i\in\mathbb{Z}}$.
Given $\delta>0$, $l\in\mathbb{N}$ and   $m,k\in\mathbb{N}$ with $m\leq k$,
by (i)-(ii) of Lemma \ref{uniformestimatesC} there exist
$n_l^\delta\in \mathbb{N}$ and $C>0$
independent on $\epsilon$, $m$ and $k$  such that
\begin{align*}
\sup_{\epsilon\in\big[0,\frac{\sqrt{\lambda}}
{2\delta}\big]}\frac{1}{k-m}\int_{-k}^{-m}
\mathbb{E} \big[\|u(-m,\tau,0)\|^2\big]d\tau
\leq C\ 
\end{align*}
\mbox{and} 
\begin{align*}
\ \sup_{\epsilon\in\big[0,\frac{\sqrt{\lambda}}
{2\delta}\big]}\frac{1}{k-m}\int_{-k}^{-m}
\mathbb{E} \big[\|\hat{u}^\epsilon_{n_l^\delta
}(-m,\tau,
0)\|^2\big]d\tau<\frac{\delta}{2^{4l}}.
\end{align*}
Define $$\mathcal{Y}^\delta_l=
\bigg\{u\in\ell^2:
u_i=0\ \mbox{for } |i|>n_l^\delta
\ \mbox{and} \
 \|u\|\leq\frac{2^l
\sqrt{C}}{\sqrt{\delta}}
        \bigg\}$$ and
$$\mathcal{Z}^\delta
_l=\Big\{u\in\ell^2:
\|u-v\|\leq\frac{1}{2^l}
\ \mbox{for some $v\in \mathcal{Y}^\delta_l$}\Big\}.$$
 Define a probability measure
$\eta^\epsilon_{k,m}=\frac{1}{k-m}
\int_{-k}^{-m}\mathbb{P}\Big\{\omega\in\Omega:
u^\epsilon(-m,\tau,0)\in\cdot    \Big\}
d\tau$ on $\ell^2$.
Then by Chebychev's inequality one can verify
\begin{align}\label{CC20ff}
\eta^\epsilon_{k,m}(\ell^2\setminus
\mathcal{Z}^\delta_l)&
\leq\frac{\delta}{2^{2l}
C}\frac{1}{k-m}\int_{-k}^{-m}
\mathbb{E} \big(\|u^\epsilon(-m,\tau,0)
\|^2\big)d\tau
\nonumber\\& \ \ +\frac{2^{2l}}{k-m}
\int_{-k}^{-m}
\mathbb{E} \big[\|\hat{u}^\epsilon_{n^\delta_l}
(-m,\tau,
0)\|^2\big]d\tau
\leq\frac{\delta}{2^{2l-1}}.
\end{align}
Let $\mathcal{Z}^\delta=
\bigcap_{l=1}^\infty
\mathcal{Z}^\delta_l $.
Since one can verify that $\mathcal{Z}^\delta$
is closed and totally bounded in $\ell^2$, it is compact in $\ell^2$. By
\eqref{CC20ff} we know $\eta^\epsilon_{k,m}
(\ell^2\setminus
\mathcal{Z}^\delta)
\leq\sum_{l=1}^\infty\eta^\epsilon_{k,m}
(\ell^2\setminus\mathcal{Z}^\delta_l)
<\delta$. Then  $\{\eta^\epsilon_{k,m}\}_{\mathbb{N}\ni k\geq m
}$ is tight on $\ell^2$ for each fixed $m\in\mathbb{N}$.
Then $\{\eta^\epsilon_{k,m}\}_{\mathbb{N}\ni k\geq m
}$ is tight on $\ell^2$ for each  $m\in\mathbb{N}$. Then there exists $\eta^\epsilon_m\in \mathcal{P}(\ell^2)$
and a subsequence (not relabeled) such that
$\eta^\epsilon_{k,m}\rightarrow\eta^\epsilon_{m}$ weakly as $k\rightarrow\infty$. For each fixed $t\in\mathbb{R}$, we choose $m\in\mathbb{N}$ such that
$-m\leq t$, and define
$\mu^\epsilon_t:=Q^\epsilon_{-m,t}
\eta^\epsilon_{m}$. One can verify that this definition is independent of  the choice
  of $m$.  Then for every fixed
$t\in\mathbb{R}$ and for any $\mathbb{N}\ni m\geq-\tau\geq -t$, we have
$Q^\epsilon_{\tau,t}\mu^\epsilon_{\tau}
=Q^\epsilon
_{\tau,t}Q^\epsilon_{-m,\tau}
\eta^\epsilon_{-m}=(P^\epsilon_{-m,\tau}
P^\epsilon_{\tau,t})^*
\eta^\epsilon_{-m}=
Q^\epsilon_{-m,t}\eta_{-m}=\mu^\epsilon_t$.
This complete the proof.
\end{proof}

\section{Proof of Theorem \ref{union2}
}
\begin{proof}
\textbf{Proof of Theorem \ref{union2}}.
(i) For every $\delta>0$, $t\in\mathbb{R}$,
$\epsilon\in[a,b]$ and
$\mu_t^\epsilon\in\bigcup_{\epsilon\in[a,b]
}\mathcal{E}_t^\epsilon$, we shall find a compact set
$\mathcal{Z}^\delta(t)$ independent
of $\epsilon$
such that $\mu_t^\epsilon(\mathcal{Z}^\delta(t))
>1-\delta$. Given $t\in\mathbb{R}$ and $l\in\mathbb{N}$,
by (i) and (iii) of Lemma \ref{uniformestimatesC}
 there exists
$n^\delta_l(t)\in \mathbb{N}$,
$K_l^\delta(t)\in\mathbb{N}$ and $C(t)>0$
independent of
 $\epsilon$ and $u_0$ such that
\begin{align}\label{-CC16}
 &\sup_{k\geq K_l^\delta(t)}\sup_{\epsilon\in\big[0,\frac{\sqrt{\lambda}}
{2\delta}\big]}
\sup_{u_0\in\ell^2}\frac{1}{k+t}\int_{-k}^{t}
\mathbb{E} \big[\|u(t,\tau,u_0)\|^2\big]d\tau
\leq C(t),
\\& \label{-CC17}
\sup_{\epsilon\in\big[0,\frac{\sqrt{\lambda}}
{2\delta}\big]}
\sup_{u_0\in\ell^2}\frac{1}{k+t}\int_{-k}^{t}
\mathbb{E} \big[\|\hat{u}^\epsilon_{n^\delta_l(t)}
(t,\tau,
u_0)\|^2\big]d\tau<\frac{\delta}{2^{4l}}.
\end{align}
Define $\mathcal{Y}^\delta_l(t):=
\bigg\{u\in\ell^2:
u_i=0\ \mbox{for } |i|>n^\delta_l(t)
\ \mbox{and} \
 \|u\|\leq\frac{2^l
\sqrt{C(t)}}{\sqrt{\delta}}
        \bigg\}$, $\mathcal{Z}_l^\delta(t)
:=\Big\{u\in\ell^2:
\|u-v\|\leq\frac{1}{2^l}
\ \mbox{for some $v\in \mathcal{Y}^\delta_l
(t)$}       \Big\}$\ \mbox{and $\mathcal{Z}^\delta(t)
:=\bigcap_{l=1}^\infty
\mathcal{Z}^\delta_l(t)$}. Note that $\mathcal{Z}^\delta(t)$ is
 compact in $\ell^2$. In what follows we  prove
 $\mu_t^\epsilon(\mathcal{Z}^\delta(t))>1-\delta$.
Denote by
$\mathcal{X}^\delta_n(t):=\bigcap_{l=1}^n
\mathcal{Z}^\delta_l(t)$ for $n\in\mathbb{N}$.
Then
$\mu_t^\epsilon(\mathcal{Z}^\delta(t))=
\mu_t^\epsilon\big(\bigcap_{n=1}
^\infty\mathcal{X}^\delta_n(t)\big)=
\lim\limits_{n\rightarrow\infty}\mu_t^\epsilon\big(
\mathcal{X}^\delta_n(t)\big)$, and hence there exists $N=N(\delta,t)\in
\mathbb{N}$ such that $0\leq\mu_t^\epsilon
(\mathcal{X}^\delta_n(t))-
\mu_t^\epsilon(\mathcal{Z}^\delta(t))
\leq\delta/3$ for all $n\geq N$.  By Fubini's theorem and the definition of  $\{\mu^\epsilon_t\}_{t\in\mathbb{R}}$ of $(P^\epsilon_{\tau,t})_{t\geq\tau}$,
we find that for all $\tau\leq t\in\mathbb{R}$,
\begin{align}\label{uuua}
\eta_t^\epsilon(\mathcal{X}^\delta_N(t) )&=\int_{\ell^2}
\mathbb{P}\bigg(\Big\{\omega\in\Omega:
u^\epsilon(t,\tau,x)\in \mathcal{X}^\delta_N(t)
\Big\}\bigg)\mu_\tau^\epsilon(dx)
\nonumber\\&=\int_{\ell^2}
\mathbb{P}\bigg(\Big\{\omega\in\Omega:
u^\epsilon(t,\tau,x)\in \mathcal{X}^\delta_N(t)
\Big\}\bigg) h^\epsilon(\tau)\mu^\epsilon(dx)
\nonumber\\&=
\frac{1}{k+t}\int_{-k}^t\int_{\ell^2}
\mathbb{P}\bigg(\Big\{\omega\in\Omega:
u^\epsilon(t,\tau,x)\in \mathcal{X}^\delta_N(t)
\Big\}\bigg) h^\epsilon(\tau)\mu^\epsilon(dx)d\tau
\nonumber\\&=\int_{\ell^2} \frac{1}{k+t}\int_{-k}^t
\mathbb{P}\bigg(\Big\{\omega\in\Omega:
u^\epsilon(t,\tau,x)\in \mathcal{X}^\delta_N(t)
\Big\}\bigg) h^\epsilon(\tau)d\tau\mu^\epsilon(dx),
\end{align}
where
$\mu^\epsilon\in\mathcal{P}(\ell^{2})$
and $h^\epsilon:\mathbb{R}\rightarrow[0,1]$
such that $\mu_\tau^\epsilon(\cdot)= h^\epsilon(\tau)\mu^\epsilon(\cdot)$.
As before, by \eqref{-CC16}, we can verify
\begin{align}\label{iiii}
&\limsup_{k\rightarrow\infty}
\frac{1}{k+t}\int_{-k}^t
\mathbb{P}\bigg(\Big\{\omega\in\Omega:
u^\epsilon(t,\tau,x)\notin \mathcal{X}^\delta_N(t)
\Big\}\bigg) h^\epsilon(\tau)d\tau
\nonumber\\&\leq\sum_{l=1}^N
\frac{\delta}{2^{2l}C(t)}
\limsup_{k\rightarrow\infty}
\frac{1}{k+t}\int_{-k}^t
\mathbb{E}\big[\|\tilde{u}^\epsilon_{n^\delta_l(t)}
(t,\tau,x)
\|^2\big] h^\epsilon(\tau)d\tau
\nonumber\\&\ \ + \sum_{l=1}^N2^{2l}
\limsup_{k\rightarrow\infty}
\frac{1}{k+t}\int_{-k}^t
\mathbb{E}\big[\|\hat{u}^\epsilon_{n^\delta_l(t)}(t,\tau,x)\|^2\big]
h^\epsilon(\tau)d\tau
\leq\sum_{l=1}^N\frac{\delta}{2^{2l-1}}
\leq\frac{2\delta}{3}.
\end{align}
Then by Fatou's lemma, \eqref{uuua} and \eqref{iiii} we find
\begin{align*}
\eta_t^\epsilon(\mathcal{X}^\delta_N(t) )&
=\liminf_{k\rightarrow\infty}\int_{\ell^2} \frac{1}{k+t}\int_{-k}^t
\mathbb{P}\bigg(\Big\{\omega\in\Omega:
u^\epsilon(t,\tau,x)\in \mathcal{X}^\delta_N(t)
\Big\}\bigg) h^\epsilon(\tau)d\tau\mu^\epsilon(dx)
\nonumber\\&\geq1-\int_{\ell^2}
\limsup_{k\rightarrow\infty}\bigg[\frac{1}{k+t}
\int_{-k}^t
\mathbb{P}\bigg(\Big\{\omega\in\Omega:
u^\epsilon(t,\tau,x)\notin \mathcal{X}^\delta_N(t)
\Big\}\bigg) h^\epsilon(\tau)d\tau\bigg]
\mu^\epsilon(dx)
\geq\frac{\delta}{3}.
\end{align*}
This concludes  the proof of (i).

(ii) For every $\tau\in\mathbb{R}$, $t\geq\tau$
 $\delta>0$, $\epsilon_0\in\big[0,\frac{\sqrt{\lambda}}
{2\delta}\big]$ and compact set $\mathbf{K}$ of $\ell^2$, since we can prove that all uniform estimates of the solutions are uniform for $\epsilon\in\big[0,\frac{\sqrt{\lambda}}
{2\delta}\big]$, by a stoping time argument, we can prove
\begin{align}\label{yytr}
&\lim_{\epsilon\rightarrow\epsilon_0}
\sup_{u_0\in \mathbf{K}}
\mathbb{P}\bigg(\big\{\omega\in\Omega:
\|u^\epsilon(t,\tau,u_{0})-
u^{\epsilon_0}(t,\tau,u_{0})\|\geq \delta \big\}\bigg)=0.
\end{align}
The proof of \eqref{yytr} is quite similar to the autonomous case as in
\cite{Chenzhang1,Chenzhang2,LIding0,LIding1}, the details are omitted here.  Then by  (i), \eqref{yytr}, Theorem \ref{Main-results} and Prohorov's theorem, we complete the proof of (ii).
\end{proof}

\section{Acknowledgements}
Renhai Wang  was supported by China Postdoctoral Science Foundation under grant numbers
2020TQ0053 and 2020M680456.
Tom\'as Caraballo was supported by the Spanish Ministerio de Ciencia, Innovaci\'{o}n y Universidades project PGC2018-096540-B-I00,  and Junta de Andalucia (Spain) under projects US-1254251 and P18-FR-4509. Nguyen Huy Tuan was  supported by Van Lang University.

%\section{Reference}

\end{document}